\nonstopmode \numberwithin{equation}{section}
\newtheorem{theorem}{Theorem}
\newtheorem{lemma}{Lemma}[section]
\begin{document}
\title[Fractional operator representations of
generalized Struve function]{Marichev-Saigo-Maeda fractional operator representations of
generalized Struve function}

\begin{abstract}
The aim of this paper is to apply generalized operators of fractional integration and differentiation involving Appell's function $F_3(:)$ due to Marichev-Saigo-Maeda, to
the generalized Struve function. The results are expressed in terms of generalized
Wright function. The results obtained here are general in nature and can easily obtain various known results.
\end{abstract}

\author{K.S. Nisar, Abdon Atangana and S.D. Purohit}
\address{K. S. Nisar : Department of Mathematics, College of Arts and
Science, Prince Sattam bin Abdulaziz University, Wadi Aldawaser, Riyadh
region 11991, Saudi Arabia}
\email{ksnisar1@gmail.com, n.sooppy@psau.edu.sa}
\address{Abdon Atangana : Institute for Ground water Studies, Faculty of Natural and Agricultural Sciences, University of the Free State, Bloemfontein 9300, South Africa}
\email{abdonatangana@yahoo.fr}
\address{S.D. Purohit : Department of HEAS (Mathematics), 
Rajasthan Technical University, Kota 324010, Rajasthan, India.}
\email{sunil\_a\_purohit@yahoo.com}

\keywords{Marichev-Saigo-Maeda fractional integral and differential
operators, Generalized Struve function, Generalized Wright function\\
\textbf{AMS 2010 Subject Classification:} 26A33, 33C05, 33C10, 33C20.}
\maketitle

\section{Introduction}

Recently, Nisar {\it{et al.}} \cite{Nisar} introduced a new generalization of Struve
function and defined as the following power series: 
\begin{equation}
_{a}\mathcal{W}_{p,b,c,\xi }^{\alpha ,\mu }\left( z\right)
:=\sum_{k=0}^{\infty }\frac{\left( -c\right) ^{k}}{\Gamma \left( \alpha
k+\mu \right) \Gamma \left( ak+\frac{p}{\xi }+\frac{b+2}{2}\right) }\left( 
\tfrac{z}{2}\right) ^{2k+p+1} ~~~~(a\in \mathbb{N}, p, b, c\in \mathbb{C}),  \label{1}
\end{equation}%
where $\lambda >0,\alpha >0,\xi >0$ and $\mu $ is an arbitrary parameter.

The Fox-Wright hypergeometric function ${}_{p}\Psi _{q}(z)$ is given by the
series 
\begin{equation}
{}_{p}\Psi _{q}(z)={}_{p}\Psi _{q}\left[ 
\begin{array}{c}
(a_{i},\alpha _{i})_{1,p} \\ 
(b_{j},\beta _{j})_{1,q}%
\end{array}%
\bigg|z\right] =\frac{\prod_{j=1}^{q}\Gamma (\beta _{j})}{%
\prod_{i=1}^{p}\Gamma (\alpha _{i})}\displaystyle\sum_{k=0}^{\infty }\dfrac{%
\prod_{i=1}^{p}\Gamma (a_{i}+\alpha _{i}k)}{\prod_{j=1}^{q}\Gamma
(b_{j}+\beta _{j}k)}\dfrac{z^{k}}{k!},  \label{Fox-Wright}
\end{equation}%
where $a_{i},b_{j}\in \mathbb{C}$, and $\alpha _{i},\beta _{j}\in \mathbb{R}$
($i=1,2,\ldots ,p;j=1,2,\ldots ,q$). Asymptotic behavior of this function
for large values of argument of $z\in {\mathbb{C}}$ were studied in \cite%
{Foxc}, under the condition 
\begin{equation}
\displaystyle\sum_{j=1}^{q}\beta _{j}-\displaystyle\sum_{i=1}^{p}\alpha
_{i}>-1,  \label{FW-C}
\end{equation}%
was found in the work of \cite{Wright-2,Wright-3}. Properties of this
generalized Wright function were investigated in \cite{Kilbas} (see also 
\cite{Kilbas-Book, Kilbas-itsf, Kilbas-frac}).

The familiar generalized hypergeometric function $_{p}F_{q}$ is defined as follows 
\cite{Rainville}: 
\begin{equation}  \label{eqn-1-hyper}
_{p}F_{q}\left[ 
\begin{array}{r}
\left( a _{p}\right) ; \\ 
\left( b _{q}\right) ;%
\end{array}
z\right] =\sum\limits_{n=0}^{\infty }\frac{\Pi _{j=1}^{p}\left( a_{j}\right)
_{n}}{\Pi _{j=1}^{q}\left( b _{j}\right) _{n}}\frac{z^{n}}{n! }
\end{equation}
\begin{equation*}
(p\leq q,\, z \in \mathbb{C};\,\, p=q+1,\, \left\vert z\right\vert <1),
\end{equation*}
which is an obvious special case of the Fox-Wright hypergeometric function $%
{}_{p}\Psi _{q}(z)$ \eqref{Fox-Wright} when $\alpha _{i}=1=\beta _{j}$ ($%
i=1,2,\ldots ,p;j=1,2,\ldots ,q$).

\vskip3mm Let $\lambda $, $\lambda ^{\prime }$, $\xi $, $\xi ^{\prime }$, $%
\gamma \in \mathbb{C}$ with $\Re (\gamma )>0$ and $x\in \mathbb{R}^{+}$.
Then the generalized fractional integral operators involving the Appell
functions $F_{3}$ are defined as follows: 
\begin{equation}
\left( I_{0{+}}^{\lambda ,\lambda ^{\prime },\xi ,\xi ^{\prime },\gamma
}f\right) (x)=\frac{x^{-\lambda }}{\Gamma (\gamma )}\int_{0}^{x}(x-t)^{%
\gamma -1}t^{-\lambda ^{\prime }}F_{3}\left( \lambda ,\lambda ^{\prime },\xi
,\xi ^{\prime };\gamma ;1-\frac{t}{x},1-\frac{x}{t}\right) f(t)\,\mathrm{d}t
\label{Int-1}
\end{equation}%
and 
\begin{equation}
\left( I_{-}^{\lambda ,\lambda ^{\prime },\xi ,\xi ^{\prime },\gamma
}f\right) (x)=\frac{x^{-\lambda ^{\prime }}}{\Gamma (\gamma )}%
\int_{x}^{\infty }(t-x)^{\gamma -1}t^{-\lambda }F_{3}\left( \lambda ,\lambda
^{\prime },\xi ,\xi ^{\prime };\gamma ;1-\frac{t}{x},1-\frac{x}{t}\right)
f(t)\,\mathrm{d}t.  \label{Int-2}
\end{equation}%
The generalized fractional integral operators of the types \eqref{Int-1} and %
\eqref{Int-2} have been introduced by Marichev \cite{Marichev} and later
extended and studied by Sagio and Maeda \cite{Saigo}. Recently, Purohit {\it{et al.}} \cite{Purohit}, Kumar {\it{et al.}} \cite{Kumar}, Baleanu {\it{et al.}} \cite{Baleanu} and Mondal and Nisar \cite{SRN} have investigated image formulas for Marichev-Saigo-Maeda fractional integral operators involving various special functions.

The corresponding fractional differential operators have their respective
forms: 
\begin{equation}
\left( D_{0+}^{\lambda ,\lambda ^{\prime },\xi ,\xi ^{\prime },\gamma
}f\right) \left( x\right) =\left( \frac{\mathrm{d}}{\mathrm{d}x}\right) ^{%
\left[ \Re \left( \gamma \right) \right] +1}\left( I_{0+}^{-\lambda ^{\prime
},-\lambda ,-\xi ^{\prime }+\left[ \Re \left( \gamma \right) \right] +1,-\xi
,-\gamma +\left[ \Re \left( \gamma \right) \right] +1}f\right) \left(
x\right)  \label{Dif-1}
\end{equation}%
and 
\begin{equation}
\left( D_{-}^{\lambda ,\lambda ^{\prime },\xi ,\xi ^{\prime },\gamma
}f\right) \left( x\right) =\left( -\frac{\mathrm{d}}{\mathrm{d}x}\right) ^{%
\left[ \Re \left( \gamma \right) \right] +1}\left( I_{-}^{-\lambda ^{\prime
},-\lambda ,-\xi ^{\prime },-\xi +\left[ \Re \left( \gamma \right) \right]
+1,-\gamma +\left[ \Re \left( \gamma \right) \right] +1}f\right) \left(
x\right) .  \label{Dif-2}
\end{equation}%
The fractional integral operators have many interesting applications in
various fields, for example certain class of complex analytic
functions (see \cite{Kim}). For some basic results on fractional calculus,
one may refer to \cite{Kiryakova, Miller, Srivastava2}.

The following known results will be required (see \cite{Saigo}, \cite%
{Kilbas-itsf}). \vskip3mm

\begin{lemma}
\label{lem-1} Let $\lambda ,\lambda ^{\prime },\xi ,\xi ^{\prime },\gamma
,\rho \in \mathbb{C}$ be such that $Re{(\gamma)}>0$ and 
\begin{equation*}
\Re{(\rho)}>\max \{0,\Re{(\lambda-\lambda'-\zeta-\gamma)},%
\Re{(\lambda'-\zeta')}\}.
\end{equation*}%
then there exists the relation 
\begin{equation}
\left( I_{0+}^{\lambda ,\lambda ^{\prime },\xi ,\xi ^{\prime },\gamma
}\;t^{\rho -1}\right) (x)=\frac{\Gamma \left( \rho \right) \Gamma \left(
\rho +\gamma -\lambda -\lambda ^{\prime }-\xi \right) \Gamma \left( \rho
+\xi {^{\prime }}-\lambda {^{\prime }}\right) }{\Gamma \left( \rho +\xi {%
^{\prime }}\right) \Gamma \left( \rho +\gamma -\lambda -\lambda {^{\prime }}%
\right) \Gamma \left( \rho +\gamma -\lambda {^{\prime }}-\xi \right) }%
x^{\rho -\lambda -\lambda ^{\prime }+\gamma -1}
\end{equation}%
where 
\begin{equation*}
\Gamma \left[ 
\begin{array}{l}
a,b,c \\ 
d,e,f%
\end{array}%
\right] =\frac{\Gamma {(a)}\Gamma {(b)}\Gamma {(c)}}{\Gamma {(d)}\Gamma {(e)}%
\Gamma {(f)}}.
\end{equation*}
\end{lemma}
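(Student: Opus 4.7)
The plan is to reduce the statement to a one-variable integral by direct substitution and then recognize the result as a known closed-form sum. First I would plug $f(t)=t^{\rho-1}$ into the right-hand side of \eqref{Int-1} and perform the change of variables $t = xu$, $u\in(0,1)$. This substitution extracts the overall power $x^{\rho-\lambda-\lambda'+\gamma-1}$, which already matches the $x$-dependence on the right-hand side of the claimed identity, and leaves behind the dimensionless integral
$$J=\frac{1}{\Gamma(\gamma)}\int_{0}^{1}u^{\rho-\lambda'-1}(1-u)^{\gamma-1}F_{3}\!\left(\lambda,\lambda',\xi,\xi';\gamma;1-u,\,1-\tfrac{1}{u}\right)du.$$

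Next I would expand $F_3$ as its defining double power series in the two arguments, interchange summation with integration, and evaluate each resulting $u$-integral as a Beta integral. After relabelling summation indices and collecting Pochhammer symbols, the remaining double sum is a classical Kampé de Fériet type series at unit argument whose closed form is precisely the quotient of six Gamma functions on the right-hand side. The three conditions $\Re(\rho)>0$, $\Re(\rho)>\Re(\lambda-\lambda'-\xi-\gamma)$, $\Re(\rho)>\Re(\lambda'-\xi')$ are exactly what is needed to guarantee convergence of the Beta integrals and to justify the interchange.

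A cleaner route, which I would actually prefer to follow, is to pass to the Mellin transform. The Marichev--Saigo--Maeda operator was designed by Saigo and Maeda so that its Mellin symbol factors as a product of three independent Gamma ratios; applying this symbol to the Mellin transform of $t^{\rho-1}$ then yields the stated formula immediately, bypassing the issue of handling the singular second argument $1-1/u$ of $F_3$.

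The main obstacle in the direct series approach is precisely that $1-1/u$ is unbounded as $u\to 0^{+}$, so the naive power-series expansion of $F_3$ does not converge on the full integration interval. One must either appeal to the analytic continuation of $F_3$ via its Euler-type integral representation, or (as above) bypass the problem entirely by working on the Mellin side. Since the identity is recorded in both \cite{Saigo} and \cite{Kilbas-itsf}, I would cite the lemma in the stated form and defer the technical verification of the interchange and the double-series evaluation to those references.
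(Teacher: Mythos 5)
The paper offers no proof of Lemma \ref{lem-1} at all: it is quoted as a known result with a pointer to \cite{Saigo} and \cite{Kilbas-itsf}, so your final decision to cite those sources and defer the verification is exactly what the paper does, and to that extent your proposal is consistent with it. Where your sketch is concrete it is also essentially sound: the substitution $t=xu$ does extract the factor $x^{\rho-\lambda-\lambda'+\gamma-1}$ and leaves precisely the dimensionless integral $J$ you wrote down, and you are right that the naive term-by-term expansion of $F_3$ fails, since the double series defining $F_3$ converges only for arguments of modulus less than one while the second argument $1-1/u$ is unbounded as $u\to 0^{+}$. The standard rigorous route --- and the precise version of your ``Mellin symbol'' remark --- is the Saigo--Maeda factorization of $I_{0+}^{\lambda,\lambda',\xi,\xi',\gamma}$ into a composition of three Erd\'elyi--Kober/Saigo-type fractional integrals (equivalently, a Mellin--Barnes representation of the $F_3$ kernel); each factor maps a power $t^{\sigma-1}$ to a single ratio of two Gamma functions times a power, and composing the three produces exactly the six-Gamma quotient in the lemma. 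Note that $t^{\rho-1}$ has no classical Mellin transform on $(0,\infty)$, so the multiplier argument must be phrased through this operator factorization rather than literally ``applying the symbol to the Mellin transform of $t^{\rho-1}$.'' One further caution: the convergence conditions you quote are copied from the paper's statement, which contains a typo; the condition in Saigo--Maeda, which is what makes all numerator Gamma arguments have positive real part and justifies the Beta-integral evaluations, is $\Re(\rho)>\max\{0,\,\Re(\lambda+\lambda'+\xi-\gamma),\,\Re(\lambda'-\xi')\}$.
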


\begin{lemma}
\label{lem-2} Let $\lambda $, $\lambda ^{\prime }$, $\xi $, $\xi ^{\prime }$%
, $\gamma $, $\rho \in \mathbb{C}$ such that $\Re (\gamma )>0$ and 
\begin{equation*}
\Re {(\rho )}>\max \{\Re {(\xi )},\,\Re {(-\lambda -\lambda ^{\prime
}+\gamma )},\,\Re {(-\lambda -\xi ^{\prime }+\gamma )}\}.
\end{equation*}%
Then the following formula holds true: 
\begin{eqnarray}
&&\left( I_{-}^{\lambda ,\lambda ^{\prime },\xi ,\xi ^{\prime },\gamma
}t^{-\rho }\right) (x)  \label{Lem2-Eq2} \\
&=&\frac{\Gamma \left( -\xi +\rho \right) \Gamma \left( \lambda +\lambda
^{\prime }-\gamma +\rho \right) \Gamma \left( \lambda +\xi {\ ^{\prime }}%
-\gamma +\rho \right) }{\Gamma \left( \rho \right) \Gamma \left( \lambda
-\xi +\rho \right) \Gamma \left( \lambda +\lambda {^{\prime }+\xi }%
^{^{\prime }}-\gamma +\rho \right) }x^{-\lambda -\lambda ^{\prime }+\gamma
-\rho }.  \notag
\end{eqnarray}
\end{lemma}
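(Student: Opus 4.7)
The plan is to derive the image formula directly from the defining integral \eqref{Int-2}, which will mirror the standard derivation of Lemma \ref{lem-1} and justifies the result cited from \cite{Saigo, Kilbas-itsf}. First I would substitute $f(t)=t^{-\rho}$ into \eqref{Int-2} and introduce the change of variable $u=x/t$, so that $t=x/u$, $dt=-xu^{-2}du$, the interval $t\in(x,\infty)$ maps bijectively to $u\in(0,1)$, and the distinguished quantities become $t-x=x(1-u)/u$, $1-x/t=1-u$, and $1-t/x=-(1-u)/u$. Collecting the resulting powers of $x$ pulls out the factor $x^{-\lambda-\lambda'+\gamma-\rho}$ exactly as predicted by \eqref{Lem2-Eq2}, reducing the problem to evaluating a proper Beta-type integral on $(0,1)$ whose integrand is built from $(1-u)^{\gamma-1}$, a power of $u$, and $F_{3}(\lambda,\lambda',\xi,\xi';\gamma;-(1-u)/u,1-u)$.

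The second step is to expand the Appell function $F_{3}$ as its defining double power series in its two arguments and interchange the order of summation and integration. Each term then reduces to a single Beta integral of the form $\int_{0}^{1}u^{A+m-n-1}(1-u)^{\gamma+m+n-1}du=B(A+m-n,\gamma+m+n)$ for an appropriate exponent $A$ linear in $\lambda,\rho,\gamma$; the hypotheses $\Re(\rho)>\max\{\Re(\xi),\Re(-\lambda-\lambda'+\gamma),\Re(-\lambda-\xi'+\gamma)\}$ are exactly what is needed to ensure absolute convergence of both the original improper integral near $t=\infty$ and the Beta integrals, as well as to validate the interchange by dominated convergence. After substituting the Beta values and recombining, one obtains a double series in $(m,n)$ whose coefficients are ratios of Pochhammer symbols.

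The \emph{main obstacle} is the final summation of that double series into the three-by-three ratio of Gamma functions displayed in \eqref{Lem2-Eq2}. After regrouping the Pochhammers the series admits the form $\sum_{m,n\geq 0}\tfrac{(\lambda)_{m}(\lambda')_{n}(\xi)_{m}(\xi')_{n}}{(\gamma)_{m+n}\,m!\,n!}(\cdots)$ multiplied by a Gamma ratio that depends only on $m+n$, and one has to recognize this as a specialization of $F_{3}$ or of a Kampé de Fériet series at unit arguments that collapses via a Gauss/Kummer-type summation theorem. This is the one non-mechanical calculation; once it is carried out, cancelling the common factors against $\Gamma(\gamma)$ produces the stated quotient. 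An honest shortcut, used implicitly by the authors when they list this as a \textbf{known} lemma, is simply to quote the corresponding formula of Saigo--Maeda \cite{Saigo} (reproduced in \cite{Kilbas-itsf}), since the derivation above is precisely the one performed there.
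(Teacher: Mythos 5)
The paper itself offers no derivation of Lemma \ref{lem-2}: it is quoted as a known formula from \cite{Saigo} (see also \cite{Kilbas-itsf}), so your ``honest shortcut'' of citing Saigo--Maeda is exactly what the authors do, and as far as matching the paper goes that is the whole story. The issue is with the direct derivation you sketch, which contains a genuine gap rather than a merely unfinished computation. After the substitution $u=x/t$ the $F_{3}$ argument $1-t/x=-(1-u)/u$ has modulus greater than $1$ for all $u\in(0,\tfrac12)$, so the defining double series of $F_{3}$ does not converge on a fixed portion of the integration range; worse, once you expand formally, the term with index $m$ (the one carrying $\bigl(-(1-u)/u\bigr)^{m}$) produces
\begin{equation*}
\int_{0}^{1}u^{\rho+\lambda-\gamma-1-m}(1-u)^{\gamma+m+n-1}\,du ,
\end{equation*}
which diverges at $u=0$ as soon as $m>\Re(\rho+\lambda-\gamma)$, no matter how the parameters are restricted. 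So the interchange of summation and integration cannot be ``validated by dominated convergence''; the individual Beta integrals you need simply do not exist beyond finitely many indices, and consequently the Kamp\'e de F\'eriet--type double series whose summation you identify as the main obstacle never legitimately appears. A correct direct proof must use a different device: the Euler or Mellin--Barnes integral representation of $F_{3}$, a transformation reducing $F_{3}$ on this argument range to a Gauss function, or the known decomposition of the Marichev--Saigo--Maeda operator into a composition of Erd\'elyi--Kober/Riemann--Liouville operators, after which the power-function rule is applied repeatedly and Gauss's summation does the rest.

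One further caution: the Gamma quotient in \eqref{Lem2-Eq2} corresponds to the standard Saigo--Maeda definition of $I_{-}$, in which the $F_{3}$ arguments are $1-\frac{x}{t}$ and $1-\frac{t}{x}$ in that order; equation \eqref{Int-2} as printed repeats the order used in \eqref{Int-1}, which is a typo. Your substitution faithfully reproduces the printed order, i.e.\ $F_{3}\bigl(\lambda,\lambda',\xi,\xi';\gamma;-(1-u)/u,\,1-u\bigr)$, and since swapping the two arguments of $F_{3}$ amounts to swapping $(\lambda,\xi)\leftrightarrow(\lambda',\xi')$, carrying your computation through (even granting the convergence issues) would yield the formula with those parameters interchanged, not the right-hand side stated in the lemma. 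Any written-up proof should therefore start from the corrected definition.
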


The aim of this paper is to apply the generalized operators of fractional calculus for the generalized Struve function in order to get certain new image formulas.

\section{Fractional integrals of generalized Struve function}

\begin{theorem}
\label{Th1}Let $\lambda ,\lambda ^{\prime },\xi ,\xi ^{\prime },\gamma ,\rho
\in \mathbb{C}$ be such that $Re{(\gamma )}>0$ and 
\begin{equation*}
\Re{(\rho)}>\max \{0,\Re{(\lambda-\lambda'-\zeta-\gamma)},%
\Re{(\lambda'-\zeta')}\}.
\end{equation*}%
then 
\begin{eqnarray*}
&&\left( I_{0+}^{\lambda ,\lambda ^{\prime },\xi ,\xi ^{\prime },\gamma
}t^{\rho -1}~_{a}\mathcal{W}_{p,b,c,\xi }^{\alpha ,\mu }\left( t\right)
\right) (x) \\
&=&\frac{x^{\rho +p-\lambda -\lambda ^{^{\prime }}+\gamma }}{2^{p+1}} \\
&&\times _{4}\Psi _{5}\left[ 
\begin{array}{c}
\left( \rho +p+1,2\right) ,\left( \rho +p+\gamma -\lambda -\lambda
^{^{\prime }}-\xi ,2\right) , \\ 
\left( \rho +p+1+\xi ^{^{\prime }},2\right) ,\left( \rho +p+1+\gamma
-\lambda -\lambda ^{^{\prime }},2\right) ,%
\end{array}%
\right.  \\
&&~~~~~~~~~\ ~~~~~~~~\left. 
\begin{array}{c}
\left( \rho +p+1+\xi ^{^{\prime }}-\lambda ^{^{\prime }},2\right) ,\left(
1,1\right) ; \\ 
\left( \rho +p+1+\gamma +\lambda ^{^{\prime }}-\xi ,2\right) ,\left( \mu
,\alpha \right) ,\left( \frac{p}{\xi }+\frac{b+2}{2},a\right) ;%
\end{array}%
-\frac{cx^{2}}{4}\right].  
\end{eqnarray*}
\end{theorem}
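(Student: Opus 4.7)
The plan is to prove the formula by the standard series-expansion technique that fits all Marichev--Saigo--Maeda image formulas for entire special functions. First, I would substitute the defining power series \eqref{1} of $_{a}\mathcal{W}_{p,b,c,\xi }^{\alpha ,\mu }(t)$, factor out the constant $1/2^{p+1}$, and rewrite the integrand as
\[
t^{\rho -1}\,{}_{a}\mathcal{W}_{p,b,c,\xi }^{\alpha ,\mu }(t)=\frac{1}{2^{p+1}}\sum_{k=0}^{\infty }\frac{(-c/4)^{k}\,t^{\rho +p+2k}}{\Gamma (\alpha k+\mu )\,\Gamma (ak+\tfrac{p}{\xi }+\tfrac{b+2}{2})}.
\]
Because the Struve series is entire in its argument, the convergence is absolute and uniform on compact subsets of $(0,x]$, so I can interchange $I_{0+}^{\lambda ,\lambda ^{\prime },\xi ,\xi ^{\prime },\gamma }$ with the sum by a routine dominated-convergence/Fubini argument and reduce the problem to evaluating the operator on a single power $t^{\rho +p+2k}$.

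Next I would apply Lemma \ref{lem-1} termwise with $\rho$ replaced by $\rho +p+2k+1$. The hypothesis on $\Re (\rho )$ in the theorem is precisely what is needed to make the lemma applicable, and since the right-hand side of the \emph{max}-condition is independent of $k$ while the left-hand side only grows with $k\geq 0$, the lemma's admissibility condition holds uniformly in $k$. This yields
\[
\bigl(I_{0+}^{\lambda ,\lambda ^{\prime },\xi ,\xi ^{\prime },\gamma }\,t^{\rho +p+2k}\bigr)(x)=x^{\rho +p+2k-\lambda -\lambda ^{\prime }+\gamma }\,\frac{\Gamma (\rho +p+1+2k)\,\Gamma (\rho +p+1+2k+\gamma -\lambda -\lambda ^{\prime }-\xi )\,\Gamma (\rho +p+1+2k+\xi ^{\prime }-\lambda ^{\prime })}{\Gamma (\rho +p+1+2k+\xi ^{\prime })\,\Gamma (\rho +p+1+2k+\gamma -\lambda -\lambda ^{\prime })\,\Gamma (\rho +p+1+2k+\gamma -\lambda ^{\prime }-\xi )}.
\]

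In the final step I would pull the overall factor $x^{\rho +p-\lambda -\lambda ^{\prime }+\gamma }/2^{p+1}$ outside the sum, combine $(-c/4)^{k}x^{2k}$ into $(-cx^{2}/4)^{k}$, and insert the tautology $1=\Gamma (1+k)/k!$ so as to produce the canonical $z^{k}/k!$ weight of \eqref{Fox-Wright}. The resulting four numerator Gamma factors (three of step $2$ from Lemma \ref{lem-1} and the extra $(1,1)$ from $\Gamma (1+k)$) and five denominator Gamma factors (three of step $2$ from the lemma plus $\Gamma (\alpha k+\mu )$ and $\Gamma (ak+\tfrac{p}{\xi }+\tfrac{b+2}{2})$ of steps $\alpha$ and $a$ from the Struve series) repackage exactly as the $_{4}\Psi _{5}$ shown in the statement. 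The convergence condition \eqref{FW-C} reduces to $\alpha +a-1>-1$, which is automatic from $\alpha >0$ and $a\in \mathbb{N}$.

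The main obstacle is pure bookkeeping: one has to match the signs and shifts of $\lambda ,\lambda ^{\prime },\xi ,\xi ^{\prime },\gamma $ in each of the six Gamma ratios produced by Lemma \ref{lem-1} against the Fox--Wright pairs, and to arrange the artificial $(1,1)$ so that the $k!$ denominator demanded by \eqref{Fox-Wright} appears correctly. No analytic subtlety enters, because termwise integration is justified by the entirety of ${}_{a}\mathcal{W}_{p,b,c,\xi }^{\alpha ,\mu }$ and the assumed parameter conditions apply uniformly in the summation index.
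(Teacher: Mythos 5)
Your proposal is correct and follows essentially the same route as the paper: expand the Struve series, interchange the fractional integral with the sum, apply Lemma \ref{lem-1} termwise with $\rho$ replaced by $\rho+p+2k+1$ (noting the parameter condition holds uniformly in $k$), and reassemble the Gamma ratios, together with the artificial $(1,1)$ pair, into the $_{4}\Psi_{5}$ via \eqref{Fox-Wright}. Your write-up is in fact slightly more explicit than the paper's about the justification of term-by-term integration and the insertion of $\Gamma(1+k)/k!$.
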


\begin{proof}
\label{Pf1} Applying the definition of generalized of Struve function, we get
\begin{eqnarray*}
&&\left( I_{0+}^{\lambda ,\lambda ^{\prime },\xi ,\xi ^{\prime },\gamma
}t^{\rho -1}~_{a}\mathcal{W}_{p,b,c,\xi }^{\alpha ,\mu }\left( t\right)
\right) (x) \\
&=&\left( I_{0+}^{\lambda ,\lambda ^{\prime },\xi ,\xi ^{\prime },\gamma
}t^{\rho -1}\sum_{k=0}^{\infty }\frac{\left( -c\right) ^{k}}{\Gamma \left(
\alpha k+\mu \right) \Gamma \left( ak+\frac{p}{\xi }+\frac{b+2}{2}\right) }%
\left( \tfrac{t}{2}\right) ^{2k+p+1}\right)\left( x\right),
\end{eqnarray*}%
then on interchanging the integration and summation, we obtain
\begin{eqnarray*}
&&\left( I_{0+}^{\lambda ,\lambda ^{\prime },\xi ,\xi ^{\prime },\gamma
}t^{\rho -1}~_{a}\mathcal{W}_{p,b,c,\xi }^{\alpha ,\mu }\left( t\right)
\right) (x) \\
&=&\left( \sum_{k=0}^{\infty }\frac{\left( -c\right) ^{k}\left( 2\right)
^{-\left( 2k+p+1\right) }}{\Gamma \left( \alpha k+\mu \right) \Gamma \left(
ak+\frac{p}{\xi }+\frac{b+2}{2}\right) }I_{0+}^{\lambda ,\lambda ^{\prime
},\xi ,\xi ^{\prime },\gamma }t^{\rho +2k+p}\right) \left( x\right).
\end{eqnarray*}%
Now, for any $k=0,1,2,...$ and 
\begin{equation*}
\Re\left( l+\rho +2k+1\right) \geq \Re\left( \rho +l+1\right) >\max \left[ 0,%
\Re\left( \lambda -\lambda ^{^{\prime }}-\xi -\gamma \right) ,\Re\left(
\lambda ^{^{\prime }}-\xi ^{^{\prime }}\right) \right],
\end{equation*}%
we get%
\begin{eqnarray*}
&&\left( I_{0+}^{\lambda ,\lambda ^{\prime },\xi ,\xi ^{\prime },\gamma
}t^{\rho -1}~_{a}\mathcal{W}_{p,b,c,\xi }^{\alpha ,\mu }\left( t\right)
\right) (x) \\
&=&\sum_{k=0}^{\infty }\frac{\left( -c\right) ^{k}\left( 2\right) ^{-\left(
2k+p+1\right) }}{\Gamma \left( \alpha k+\mu \right) \Gamma \left( ak+\frac{p%
}{\xi }+\frac{b+2}{2}\right) } \\
&&\times \frac{\Gamma \left( \rho +p+1+2k\right) \Gamma \left( \rho
+p+1+\gamma -\lambda -\lambda ^{^{\prime }}-\xi +2k\right) }{\Gamma \left(
\rho +p+1+\xi ^{^{\prime }}+2k\right) \Gamma \left( \rho +p+1+\gamma
-\lambda -\lambda ^{^{\prime }}+2k\right) } \\
&&\times \frac{\Gamma \left( \rho +p+1+\xi ^{^{\prime }}-\lambda ^{^{\prime
}}+2k\right) }{\Gamma \left( \rho +p+1+\gamma -\lambda ^{^{\prime }}-\xi
+2k\right) }x^{\rho +p-\lambda -\lambda ^{^{\prime }}+\gamma +2k}.
\end{eqnarray*}
Interpreting the right-hand side of the above equation, in view of the definition
\eqref{Fox-Wright}, we arrive at the result of Theorem 1.
\end{proof}

\begin{theorem}
\label{Th2} Suppose $\lambda ,\lambda ^{\prime },\xi ,\xi ^{\prime },\gamma ,\rho
,b,c\in \mathbb{C}$, $a\in \mathbb{N}$ be such that $\frac{l}{\xi }+%
\frac{b}{2}\neq -1,-2,-3,..$ and 
\begin{equation*}
\Re{(\rho)}>\max \{\Re{(-\lambda-\lambda'+\gamma)},\Re{(-\lambda'-\zeta'+%
\gamma)},\Re{(\zeta)}\},
\end{equation*}%
then 
\begin{eqnarray*}
&&\left( I_{-}^{\lambda ,\lambda ^{\prime },\xi ,\xi ^{\prime },\gamma
}t^{\rho -1}~_{a}\mathcal{W}_{p,b,c,\xi }^{\alpha ,\mu }\left( t\right)
\right) (x) \\
&=&\frac{x^{-\lambda -\lambda ^{^{\prime }}+\gamma -\rho -p-1}}{2^{p+1}} \\
&&\times _{4}\Psi _{5}\left[ 
\begin{array}{c}
\left( -\xi +\rho +p+1,2\right) ,\left( \lambda +\lambda ^{^{\prime
}}-\gamma +\rho +p+1,2\right) , \\ 
\left( \rho +p+1,2\right) ,\left( \lambda -\xi -\rho +p+1,2\right) ,%
\end{array}%
\right.  \\
&&~~~~~~~~~\ ~~~~~~~~\left. 
\begin{array}{c}
\left( \lambda +\xi ^{^{\prime }}-\gamma +\rho +p+1^{^{\prime }},2\right)
,\left( 1,1\right) ; \\ 
\left( \lambda +\lambda ^{^{\prime }}+\xi ^{^{\prime }}-\gamma +\rho
+p+1,2\right) ,\left( \mu ,\alpha \right) ,\left( \frac{p}{\xi }+\frac{b+2}{2%
},a\right) ;%
\end{array}%
-\frac{cx^{2}}{4}\right]. 
\end{eqnarray*}
\end{theorem}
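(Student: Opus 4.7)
The plan is to follow the same three-step template used in the proof of Theorem~\ref{Th1} verbatim, replacing Lemma~\ref{lem-1} by Lemma~\ref{lem-2}. First, I would substitute the series definition \eqref{1} of ${}_{a}\mathcal{W}_{p,b,c,\xi}^{\alpha,\mu}(t)$ into the left-hand side and interchange the fractional integral $I_{-}^{\lambda,\lambda',\xi,\xi',\gamma}$ with the summation. This interchange is legitimate because \eqref{1} converges uniformly on compact subsets of $(0,\infty)$ and the growth conditions placed on $\Re(\rho)$ in the hypothesis match, term by term, the hypotheses of Lemma~\ref{lem-2}. What remains is to evaluate
\[
\bigl(I_{-}^{\lambda,\lambda',\xi,\xi',\gamma}\,t^{\rho+p+2k}\bigr)(x)
\]
for every nonnegative integer $k$.

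Next, I would apply Lemma~\ref{lem-2} to each of these inner integrals. The key observation is that the three lower bounds in the hypothesis of Lemma~\ref{lem-2} are all shifted by the same nonnegative quantity $2k$ once the exponent $\rho+p+2k$ is used, so validity at $k=0$ already guarantees validity for every $k\ge 0$. The lemma then produces a quotient of three numerator gamma functions and three denominator gamma functions whose arguments depend linearly on $k$ with slope $2$, multiplied by the appropriate power of $x$.

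Finally, I would recombine the terms into the Fox--Wright representation \eqref{Fox-Wright}. Pulling out the $k$-independent prefactor $x^{-\lambda-\lambda'+\gamma-\rho-p-1}/2^{p+1}$ and collecting the remaining $k$-dependent powers into $(-cx^{2}/4)^{k}$, the three numerator gammas from Lemma~\ref{lem-2} become three upper parameters of the form $(\,\cdot\,,2)$, the three denominator gammas become three of the lower parameters of the form $(\,\cdot\,,2)$, the pair $(1,1)$ in the numerator compensates for the missing $1/k!$, and the two gamma factors intrinsic to the Struve series contribute the remaining lower parameters $(\mu,\alpha)$ and $\bigl(\tfrac{p}{\xi}+\tfrac{b+2}{2},a\bigr)$. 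The main obstacle is purely bookkeeping: one must align the linear-in-$k$ shifts of six gamma arguments simultaneously to identify the correct ${}_{4}\Psi_{5}$ parameters. Verifying the Fox--Wright convergence condition \eqref{FW-C} for the resulting series is then straightforward, since the sum of the lower slopes exceeds the sum of the upper slopes by $a+\Re(\alpha)-1>-1$.
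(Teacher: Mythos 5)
Your overall template (expand \eqref{1}, interchange the operator and the sum, evaluate termwise, reassemble via \eqref{Fox-Wright}) is the same route the paper takes, and your final parameter bookkeeping and the convergence count for the resulting ${}_{4}\Psi_{5}$ are fine. The trouble is precisely your ``key observation''. Lemma~\ref{lem-2} evaluates the right-sided operator on the \emph{negative} power $t^{-\rho}$, whereas the termwise integrands here are the \emph{increasing positive} powers $t^{\rho+p+2k}$; the correct specialization is therefore $\rho_{\mathrm{lemma}}=-(\rho+p+2k)$, so the hypothesis of Lemma~\ref{lem-2} is shifted \emph{down} by $2k$, not up: it reads $-\Re(\rho+p)-2k>\max\{\Re(\xi),\Re(-\lambda-\lambda'+\gamma),\Re(-\lambda-\xi'+\gamma)\}$, which fails for all sufficiently large $k$ no matter what is assumed on $\Re(\rho)$. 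Indeed $\int_{x}^{\infty}(t-x)^{\gamma-1}t^{-\lambda}F_{3}(\cdots)\,t^{\rho+p+2k}\,\mathrm{d}t$ diverges once $2k$ is large, so validity at $k=0$ does not propagate and the termwise evaluation cannot be justified as you state. Moreover, even formally, the correct substitution produces gamma arguments containing $-\rho-p-2k$ and the power $x^{-\lambda-\lambda'+\gamma+\rho+p+2k}$, not the parameters $(\,\cdot\,+\rho+p+1,2)$ and the argument $-cx^{2}/4$ displayed in the theorem; so your plan, carried out carefully, does not land on the stated right-hand side.

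To be fair, this defect originates in the paper itself: its proof simply ``uses Lemma~\ref{lem-2}'' with $\rho$ replaced by $\rho+p+1+2k$, which is legitimate only when the exponents are $-(\rho+p+1+2k)$, i.e.\ when the integrand involves ${}_{a}\mathcal{W}_{p,b,c,\xi}^{\alpha,\mu}(1/t)$ (compare Theorem~\ref{Th4}), and even then the power of $x$ should carry $-2k$, so the Fox--Wright argument should be $-c/(4x^{2})$ rather than $-cx^{2}/4$. A sound version of the statement, and of your three-step argument, is obtained by taking the integrand to be $t^{-\rho}\,{}_{a}\mathcal{W}_{p,b,c,\xi}^{\alpha,\mu}(1/t)$: then the exponents are $-(\rho+p+1+2k)$, the hypothesis of Lemma~\ref{lem-2} is relaxed as $k$ grows (the monotonicity you wanted), and your interchange, termwise evaluation, and reassembly go through verbatim.
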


\begin{proof}
\label{Pf2} By making use of \eqref{1} in the integrand of Theorem 2 and interchanging the order of integral sign and summation, which is verified by uniform
convergence of the series, we find 
\begin{equation*}
\left( I_{-}^{\lambda ,\lambda ^{\prime },\xi ,\xi ^{\prime },\gamma
}t^{\rho -1}~_{a}\mathcal{W}_{p,b,c,\xi }^{\alpha ,\mu }\left( t\right)
\right) (x) \\
=\left( \sum_{k=0}^{\infty }\frac{\left( -c\right) ^{k}\left( 2\right)
^{-\left( 2k+p+1\right) }}{\Gamma \left( \alpha k+\mu \right) \Gamma \left(
ak+\frac{p}{\xi }+\frac{b+2}{2}\right) }I_{-}^{\lambda ,\lambda ^{\prime
},\xi ,\xi ^{\prime },\gamma }t^{\rho +2k+p}\right) \left( x\right).
\end{equation*}%
Using Lemma 1.2, we get%
\begin{eqnarray*}
&=&\sum_{k=0}^{\infty }\frac{\left( -c\right) ^{k}\left( 2\right) ^{-\left(
2k+p+1\right) }}{\Gamma \left( \alpha k+\mu \right) \Gamma \left( ak+\frac{p%
}{\xi }+\frac{b+2}{2}\right) } \\
&&\times \frac{\Gamma \left( -\xi +\rho +p+1+2k\right) \Gamma \left( \lambda
+\lambda ^{^{\prime }}-\gamma +\rho +p+1+2k\right) }{\Gamma \left( \rho
+p+1+2k\right) \Gamma \left( \lambda -\xi -\rho +p+1+2k\right) } \\
&&\times \frac{\Gamma \left( \lambda +\xi ^{^{\prime }}-\gamma +\rho
+p+1^{^{\prime }}+2\right) }{\Gamma \left( \lambda +\lambda ^{^{\prime
}}+\xi ^{^{\prime }}-\gamma +\rho +p+1+2k\right)}x^{-\lambda -\lambda ^{^{\prime }}+\gamma -\rho -p-1+2k},
\end{eqnarray*}%
which in view of Fox-Wright function arrive at the desired result.
\end{proof}

\section{Fractional differentials of generalized Struve function}

\label{Sec3}

Here we derive the Marichev-Saigo-Maeda fractional differentiation of the
generalized Struve function \eqref{1}. The following lemmas will be required
(see \cite{Kataria}).

\vskip 3mm

\begin{lemma}
\label{lem-5} Let $\lambda $, $\lambda ^{\prime },$ $\xi ,$ $\xi ^{\prime }$%
, $\gamma $, $\rho \in \mathbb{C}$ such that 
\begin{equation*}
\Re \left( \rho \right) >\max \left\{ 0,\Re \left( -\lambda +\xi \right)
,\Re \left( -\lambda -\lambda ^{\prime }-\xi ^{\prime }+\gamma \right)
\right\} .
\end{equation*}%
Then the following formula holds true: 
\begin{eqnarray}
&&\left( D_{0+}^{\lambda ,\lambda ^{\prime },\xi ,\xi ^{\prime },\gamma
}t^{\rho -1}\right) \left( x\right)  \label{D1} \\
&=&\frac{\Gamma \left( \rho \right) \Gamma \left( -\xi +\lambda +\rho
\right) \Gamma \left( \lambda +\lambda ^{\prime }+\xi ^{^{\prime }}-\gamma
+\rho \right) }{\Gamma \left( -\xi +\rho \right) \Gamma \left( \lambda
+\lambda ^{^{\prime }}-\gamma +\rho \right) \Gamma \left( \lambda +{\xi }%
^{^{\prime }}-\gamma +\rho \right) }x^{\lambda +\lambda ^{\prime }-\gamma
+\rho -1}.  \notag
\end{eqnarray}
\end{lemma}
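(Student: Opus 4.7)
The plan is to derive the formula directly from the definition \eqref{Dif-1} of the Marichev-Saigo-Maeda differential operator together with Lemma \ref{lem-1}, which already handles the action of $I_{0+}^{\lambda,\lambda',\xi,\xi',\gamma}$ on a power function. Set $n=[\Re(\gamma)]+1$. By \eqref{Dif-1} one has
\begin{equation*}
\left(D_{0+}^{\lambda,\lambda',\xi,\xi',\gamma} t^{\rho-1}\right)(x)
= \left(\frac{d}{dx}\right)^{n}
\left(I_{0+}^{-\lambda',-\lambda,-\xi'+n,-\xi,-\gamma+n}\, t^{\rho-1}\right)(x),
\end{equation*}
so the task reduces to (i) applying Lemma \ref{lem-1} with the parameter substitution $(\lambda,\lambda',\xi,\xi',\gamma)\mapsto(-\lambda',-\lambda,-\xi'+n,-\xi,-\gamma+n)$ and (ii) differentiating the resulting monomial in $x$ exactly $n$ times.

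First I would verify that the hypothesis of Lemma \ref{lem-1} holds under the substituted parameters: the condition $\Re(\rho)>\max\{0,\Re(-\lambda-\xi+\xi'-\gamma+n-n),\Re(-\lambda+\xi)\}$ simplifies to $\Re(\rho)>\max\{0,\Re(-\lambda+\xi),\Re(-\lambda-\lambda'-\xi'+\gamma)\}$ after canceling $n$ against $n$ and repackaging one inequality using $\Re(\rho+\lambda+\lambda'+\xi'-\gamma)>0 \Leftrightarrow \Re(\rho)>\Re(-\lambda-\lambda'-\xi'+\gamma)$. This matches the hypothesis stated in the lemma, so Lemma \ref{lem-1} applies. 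Substituting gives a closed form
\begin{equation*}
\left(I_{0+}^{-\lambda',-\lambda,-\xi'+n,-\xi,-\gamma+n}\, t^{\rho-1}\right)(x)
= \frac{\Gamma(\rho)\,\Gamma(\rho+\lambda-\xi)\,\Gamma(\rho+\lambda+\lambda'+\xi'-\gamma)}
{\Gamma(\rho-\xi)\,\Gamma(\rho+\lambda+\xi'-\gamma)\,\Gamma(\rho+\lambda+\lambda'-\gamma+n)}\,
x^{\rho+\lambda+\lambda'-\gamma+n-1},
\end{equation*}
where I would carefully expand each of the six Gamma arguments produced by Lemma \ref{lem-1} and watch the signs, noting in particular that the two occurrences of $n$ cancel in every argument except the single denominator factor $\Gamma(\rho+\lambda+\lambda'-\gamma+n)$ and the exponent of $x$.

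Second, I would apply $(d/dx)^{n}$ to $x^{\rho+\lambda+\lambda'-\gamma+n-1}$, which produces the factor $\Gamma(\rho+\lambda+\lambda'-\gamma+n)/\Gamma(\rho+\lambda+\lambda'-\gamma)$ and drops the exponent to $\rho+\lambda+\lambda'-\gamma-1$. The $\Gamma(\rho+\lambda+\lambda'-\gamma+n)$ factor cancels against the corresponding denominator in the previous display, leaving precisely
\begin{equation*}
\frac{\Gamma(\rho)\,\Gamma(-\xi+\lambda+\rho)\,\Gamma(\lambda+\lambda'+\xi'-\gamma+\rho)}
{\Gamma(-\xi+\rho)\,\Gamma(\lambda+\lambda'-\gamma+\rho)\,\Gamma(\lambda+\xi'-\gamma+\rho)}\,
x^{\lambda+\lambda'-\gamma+\rho-1},
\end{equation*}
which is the claimed identity \eqref{D1}.

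The only real obstacle is bookkeeping: keeping track of the swap $\lambda\leftrightarrow\lambda'$ and $\xi\leftrightarrow\xi'$ together with the sign reversals and the $+n$ shifts when substituting into the six Gamma arguments of Lemma \ref{lem-1}, and then confirming that the surplus $n$ in the exponent of $x$ is exactly annihilated by the $n$-fold differentiation so that the independence of the answer from the integer $n=[\Re(\gamma)]+1$ is manifest. Everything else is a mechanical simplification.
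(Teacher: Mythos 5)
Your derivation is correct, and it is essentially the standard argument: the paper itself gives no proof of Lemma \ref{lem-5} (it simply cites Kataria--Vellaisamy), and the route you take --- unpack the definition \eqref{Dif-1} with $n=[\Re(\gamma)]+1$, apply the $I_{0+}$ power-function formula of Lemma \ref{lem-1} with the parameters $(-\lambda',-\lambda,-\xi'+n,-\xi,-\gamma+n)$, then differentiate $n$ times --- is exactly how this image formula is obtained in the cited source. Your bookkeeping of the six Gamma arguments is right: the $n$'s cancel everywhere except in $\Gamma(\rho+\lambda+\lambda'-\gamma+n)$ and in the exponent, and the $n$-fold derivative of $x^{\rho+\lambda+\lambda'-\gamma+n-1}$ contributes precisely $\Gamma(\rho+\lambda+\lambda'-\gamma+n)/\Gamma(\rho+\lambda+\lambda'-\gamma)$, which cancels that factor and renders the answer independent of $n$, as it must be. Two small points to tidy up. First, your intermediate expression for the substituted hypothesis, $\Re(-\lambda-\xi+\xi'-\gamma+n-n)$, is garbled; the substitution of $\lambda+\lambda'+\xi-\gamma$ gives $-\lambda'-\lambda+(-\xi'+n)-(-\gamma+n)=-\lambda-\lambda'-\xi'+\gamma$, which is the condition you end up with --- note that you are (correctly) using the standard convergence condition $\Re(\rho)>\max\{0,\Re(\lambda+\lambda'+\xi-\gamma),\Re(\lambda'-\xi')\}$ rather than the misprinted $\Re(\lambda-\lambda'-\zeta-\gamma)$ that appears in the paper's statement of Lemma \ref{lem-1}; with the misprint taken literally the substitution would not reproduce the hypothesis of Lemma \ref{lem-5}, so it is worth saying explicitly that the correct condition is the positivity of the real parts of the numerator Gamma arguments. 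Second, you should record the (easy) check that the substituted operator is admissible in Lemma \ref{lem-1}, i.e.\ that its fifth parameter has positive real part: $\Re(-\gamma+n)=[\Re(\gamma)]+1-\Re(\gamma)>0$. With these remarks the argument is complete.
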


\begin{lemma}
\label{lem-6} Let $\lambda $, $\lambda ^{\prime }$, $\xi $, $\xi ^{\prime }$%
, $\gamma $, $\rho \in \mathbb{C}$ such that 
\begin{equation*}
\Re \left( \rho \right) >\max \left\{ \Re \left( -\xi ^{\prime }\right) ,\Re
\left( \lambda ^{\prime }+\xi -\gamma \right) ,\Re \left( \lambda +\lambda
^{^{\prime }}-\gamma \right) +\left[ \Re \left( \gamma \right) \right]
+1\right\} .
\end{equation*}%
Then the following formula holds true: 
\begin{eqnarray}
&&\left( D_{-}^{\lambda ,\lambda ^{\prime },\xi ,\xi ^{\prime },\gamma
}t^{-\rho }\right) \left( x\right)  \label{D2} \\
&=&\frac{\Gamma \left( \xi ^{\prime }+\rho \right) \Gamma \left( -\lambda
-\lambda ^{\prime }+\gamma +\rho \right) \Gamma \left( -\lambda ^{\prime
}-\xi +\gamma +\rho \right) }{\Gamma \left( \rho \right) \Gamma \left(
-\lambda ^{\prime }+\xi ^{\prime }+\rho \right) \Gamma \left( -\lambda
-\lambda ^{\prime }-\xi +\gamma +\rho \right) }x^{\lambda +\lambda ^{\prime
}-\gamma -\rho }.  \notag
\end{eqnarray}
\end{lemma}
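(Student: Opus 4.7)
The natural plan is to work directly from the definition \eqref{Dif-2}, which expresses $D_{-}^{\lambda,\lambda',\xi,\xi',\gamma}$ as an $m$-fold derivative of a generalized fractional integral $I_{-}$, where $m := [\Re(\gamma)]+1$. Thus one writes
\[
\left(D_{-}^{\lambda,\lambda',\xi,\xi',\gamma} t^{-\rho}\right)(x) = \left(-\frac{d}{dx}\right)^{m}\!\left(I_{-}^{-\lambda',\,-\lambda,\,-\xi',\,-\xi+m,\,-\gamma+m}\, t^{-\rho}\right)(x),
\]
and reduces the computation to Lemma \ref{lem-2} by performing the parameter substitution $\lambda\mapsto -\lambda'$, $\lambda'\mapsto -\lambda$, $\xi\mapsto -\xi'$, $\xi'\mapsto -\xi+m$, $\gamma\mapsto -\gamma+m$ in the formula \eqref{Lem2-Eq2}. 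Before invoking the lemma I would check that the three inequalities defining the admissible strip for $\rho$ in Lemma \ref{lem-2} translate, under this substitution, to exactly the hypotheses $\Re(\rho) > \max\{\Re(-\xi'),\, \Re(\lambda' + \xi - \gamma),\, \Re(\lambda + \lambda' - \gamma) + m\}$ stated in Lemma \ref{lem-6}; this is a straightforward algebraic verification.

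After the substitution, Lemma \ref{lem-2} yields an expression of the form $C_{m}\, x^{\lambda + \lambda' - \gamma + m - \rho}$, where $C_{m}$ involves a quotient of six Gamma functions; in particular $\Gamma(-\lambda-\lambda'+\gamma-m+\rho)$ appears in the numerator of $C_{m}$. Next one differentiates $m$ times, using
\[
\left(-\frac{d}{dx}\right)^{m} x^{\alpha} = (-1)^{m}\frac{\Gamma(\alpha+1)}{\Gamma(\alpha-m+1)}\,x^{\alpha-m} = \frac{\Gamma(m-\alpha)}{\Gamma(-\alpha)}\,x^{\alpha-m},
\]
where the second equality absorbs the sign $(-1)^{m}$ into a Gamma quotient via the standard reflection identity. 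Setting $\alpha = \lambda+\lambda'-\gamma+m-\rho$ produces the additional factor $\Gamma(-\lambda-\lambda'+\gamma+\rho)/\Gamma(-\lambda-\lambda'+\gamma-m+\rho)$ and lowers the exponent by $m$, leaving $x^{\lambda+\lambda'-\gamma-\rho}$ as required.

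The key bookkeeping step is observing that the denominator factor produced by the differentiation is identical to a numerator factor already present in $C_{m}$, so both occurrences of $\Gamma(-\lambda-\lambda'+\gamma-m+\rho)$ cancel. This cancellation is essential: it removes every trace of $m$ from the final formula, which is precisely what well-posedness of \eqref{Dif-2} demands. The main obstacle is not any analytic subtlety but rather the meticulous tracking of the six-Gamma substitution through the parameter shift, confirming that each argument simplifies as claimed and that the spurious $m$ in the exponent is exactly cancelled by the differentiation. Once these checks are made, the stated formula drops out immediately.
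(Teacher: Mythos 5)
Your argument is correct, and it is worth noting that the paper itself offers no proof of Lemma \ref{lem-6}: it is simply quoted from the reference \cite{Kataria}, so your derivation fills in exactly the computation that the cited source carries out. Your route is the natural one: writing $m=[\Re(\gamma)]+1$, applying Lemma \ref{lem-2} with the parameters $(-\lambda',-\lambda,-\xi',-\xi+m,-\gamma+m)$ prescribed by \eqref{Dif-2} (the substituted conditions indeed reduce to $\Re(\rho)>\max\{\Re(-\xi'),\Re(\lambda'+\xi-\gamma),\Re(\lambda+\lambda'-\gamma)+m\}$, and the requirement $\Re(-\gamma+m)>0$ is automatic), and then differentiating the resulting power $x^{\lambda+\lambda'-\gamma+m-\rho}$ $m$ times. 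Your sign bookkeeping is right, since
\begin{equation*}
\left(-\tfrac{\mathrm{d}}{\mathrm{d}x}\right)^{m}x^{\alpha}
=(-1)^{m}\,\alpha(\alpha-1)\cdots(\alpha-m+1)\,x^{\alpha-m}
=(-\alpha)_{m}\,x^{\alpha-m}
=\frac{\Gamma(m-\alpha)}{\Gamma(-\alpha)}\,x^{\alpha-m},
\end{equation*}
and with $\alpha=\lambda+\lambda'-\gamma+m-\rho$ the factor $\Gamma(-\lambda-\lambda'+\gamma-m+\rho)$ coming from Lemma \ref{lem-2} cancels against the denominator produced by the differentiation (no pole issues arise, since the hypothesis forces $\Re(-\lambda-\lambda'+\gamma-m+\rho)>0$), leaving precisely the six-Gamma quotient and the exponent $\lambda+\lambda'-\gamma-\rho$ stated in \eqref{D2}. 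I have checked each substituted Gamma argument and the translated admissibility conditions; they all match, so the proof is complete as written.
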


\begin{theorem}
\label{Th3}The following formula hold true:
\begin{eqnarray*}
&&\left( D_{0+}^{\lambda ,\lambda ^{\prime },\xi ,\xi ^{\prime },\gamma
}t^{\rho -1}~a\mathcal{W}_{p,b,c,\xi }^{\alpha ,\mu }\left( t\right) \right)
\left( x\right)  \\
&=&\frac{x^{\lambda +\lambda ^{^{\prime }}-\gamma +\rho +p}}{2^{p+1}} \\
&&\times _{4}\Psi _{5}\left[ 
\begin{array}{c}
\left( 1,1\right) ,\left( \rho
+p+1,2\right) ,\left( -\xi +\lambda +\rho +1,2\right) ,\\
\left( \mu ,\alpha
\right) ,\left( \frac{p}{\mu }+\frac{b+2}{2},a\right) ,\left( -\xi +\rho
+p+1,2\right) ,\left( \lambda +\lambda ^{\prime }-\gamma +\rho +p+1,2\right)
,
\end{array}%
\right.  \\
&&\left. 
\begin{array}{c}
\left( \lambda +\lambda ^{^{\prime }}+\xi ^{\prime }-\gamma +\rho
+p+1,2\right) ; \\ 
\left( \lambda +\xi ^{\prime }-\gamma +\rho +p+1,2\right) ;%
\end{array}%
\frac{-cx^{2}}{4}\right], 
\end{eqnarray*}
provided both the sides exists.
\end{theorem}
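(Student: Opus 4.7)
The plan is to imitate the proofs of Theorems \ref{Th1} and \ref{Th2}, but now replacing Lemma \ref{lem-1} by Lemma \ref{lem-5}. First I would substitute the power series \eqref{1} for ${}_{a}\mathcal{W}_{p,b,c,\xi}^{\alpha,\mu}(t)$ into the left-hand side and factor out the $k$-independent constants $(-c)^k 2^{-(2k+p+1)}/[\Gamma(\alpha k+\mu)\Gamma(ak+\frac{p}{\xi}+\frac{b+2}{2})]$. The resulting series consists of monomial images $(D_{0+}^{\lambda,\lambda',\xi,\xi',\gamma} t^{\rho+p+2k})(x)$, which is exactly the shape treated by Lemma \ref{lem-5} with the parameter $\rho$ of that lemma replaced by $\rho+p+1+2k$.

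Next I would justify the term-by-term differentiation: since the series defining ${}_{a}\mathcal{W}_{p,b,c,\xi}^{\alpha,\mu}$ converges uniformly on compact subsets of $(0,\infty)$ (by the same $\Gamma$-ratio estimate that guarantees its entirety), and since $D_{0+}^{\lambda,\lambda',\xi,\xi',\gamma}$ is defined through an Erd\'elyi--Kober-type integral followed by a finite-order derivative, the interchange of $\sum_{k\ge 0}$ and $D_{0+}^{\lambda,\lambda',\xi,\xi',\gamma}$ is legitimate under the hypotheses on $\rho$ assumed in Lemma \ref{lem-5}. I would remark, as in the proof of Theorem \ref{Th1}, that for every $k\ge 0$ the shifted condition
\[
\Re(\rho+p+1+2k)>\max\bigl\{0,\ \Re(-\lambda+\xi),\ \Re(-\lambda-\lambda'-\xi'+\gamma)\bigr\}
\]
is inherited from the $k=0$ case, so Lemma \ref{lem-5} applies termwise.

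Applying Lemma \ref{lem-5} then produces, inside the sum, the $\Gamma$-quotient
\[
\frac{\Gamma(\rho+p+1+2k)\,\Gamma(-\xi+\lambda+\rho+p+1+2k)\,\Gamma(\lambda+\lambda'+\xi'-\gamma+\rho+p+1+2k)}{\Gamma(-\xi+\rho+p+1+2k)\,\Gamma(\lambda+\lambda'-\gamma+\rho+p+1+2k)\,\Gamma(\lambda+\xi'-\gamma+\rho+p+1+2k)}
\]
multiplied by $x^{\lambda+\lambda'-\gamma+\rho+p+2k}$. Pulling the factor $x^{\lambda+\lambda'-\gamma+\rho+p}/2^{p+1}$ outside the sum and folding $(-cx^2/4)^k$ back in converts the remainder into a series of the form $\sum_{k\ge 0}\tfrac{\prod\Gamma(a_i+\alpha_i k)}{\prod\Gamma(b_j+\beta_j k)}\,z^k/k!$, which by definition \eqref{Fox-Wright} equals a ${}_4\Psi_5$. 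The denominator factor $\Gamma(\alpha k+\mu)\Gamma(ak+\frac{p}{\xi}+\frac{b+2}{2})$ from the Struve series contributes the two bottom parameters $(\mu,\alpha)$ and $(\tfrac{p}{\xi}+\tfrac{b+2}{2},a)$, and a factor $(1,1)$ in the top slot accounts for the $1/k!$, yielding exactly the ${}_4\Psi_5$ displayed in the theorem.

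The main obstacle is purely bookkeeping: matching the six Gamma factors coming from Lemma \ref{lem-5} (three in the numerator, three in the denominator, all with the offset $p+1$) to the top and bottom rows of the stated ${}_4\Psi_5$, and verifying that the three numerator $\Gamma$'s with step size $2$ together with the $(1,1)$ atom give precisely four upper parameters while the three denominator $\Gamma$'s together with $(\mu,\alpha)$ and $(\tfrac{p}{\xi}+\tfrac{b+2}{2},a)$ give five lower parameters. No new analytic difficulty arises beyond the parameter-restriction check that underwrites the pointwise application of Lemma \ref{lem-5}.
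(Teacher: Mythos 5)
Your proposal is correct and is exactly the argument the paper intends: the paper omits an explicit proof of Theorem \ref{Th3}, but it is the same series-expansion/term-by-term scheme used for Theorems \ref{Th1} and \ref{Th2}, with Lemma \ref{lem-5} (from \cite{Kataria}) applied with $\rho$ shifted to $\rho+p+1+2k$ and the result reassembled via \eqref{Fox-Wright}. Note only that your bookkeeping actually yields the parameters $\left(-\xi+\lambda+\rho+p+1,2\right)$ and $\left(\tfrac{p}{\xi}+\tfrac{b+2}{2},a\right)$, so the occurrences of $\left(-\xi+\lambda+\rho+1,2\right)$ and $\tfrac{p}{\mu}$ in the displayed statement are typographical slips in the theorem, not errors in your derivation.
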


\begin{theorem}
\label{Th4} The following formula hold true: 
\begin{eqnarray*}
&&\left( D_{-}^{\lambda ,\lambda ^{\prime },\xi ,\xi ^{\prime },\gamma
}t^{\rho -1}~a\mathcal{W}_{p,b,c,\xi }^{\alpha ,\mu }\left( \frac{1}{t}%
\right) \right) \left( x\right)  \\
&=&\frac{x^{\lambda +\lambda ^{^{\prime }}-\gamma -\rho -p}}{2^{p+1}} \\
&&\times _{4}\Psi _{5}\left[
\begin{array}{c}
\left( 1,1\right) ,\left( \xi ^{\prime
}+\rho +p+1,2\right) ,\left( -\lambda -\lambda ^{\prime }+\gamma +\rho
+p+1,2\right) ,\\
\left( \mu ,\alpha \right) ,\left( \frac{p}{\mu }+\frac{b+2%
}{2},a\right) ,\left( \rho +p+1,2\right) ,\left( -\lambda +\xi ^{\prime
}+\rho +p+1,2\right) ,\end{array}{c}\right.  \\
&&\left. 
\begin{array}{c}
\left( -\lambda ^{\prime }-\xi +\gamma +\rho +p+1,2\right) ; \\ 
\left( -\lambda -\lambda ^{\prime }-\xi +\gamma +\rho +p+1,2\right) ;%
\end{array}%
\frac{-cx^{2}}{4}\right], 
\end{eqnarray*}
provided both the sides exists.
\end{theorem}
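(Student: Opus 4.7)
The proof follows the same three-step template used for Theorem \ref{Th2}, with Lemma \ref{lem-6} replacing Lemma \ref{lem-2}. The crucial point is that the argument $1/t$ of the generalized Struve function, together with the multiplier $t^{\rho-1}$, produces negative powers of $t$ to which Lemma \ref{lem-6} (which evaluates $D_{-}$ on $t^{-\rho}$) applies directly.

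First, I substitute $z\mapsto 1/t$ in \eqref{1} to expand
\[
t^{\rho-1}\,{}_{a}\mathcal{W}_{p,b,c,\xi}^{\alpha,\mu}\!\left(\tfrac{1}{t}\right)
 = \sum_{k=0}^{\infty}\frac{(-c)^{k}\,2^{-(2k+p+1)}}{\Gamma(\alpha k+\mu)\,\Gamma\!\left(ak+\tfrac{p}{\xi}+\tfrac{b+2}{2}\right)}\;t^{\,\rho-p-2-2k}.
\]
This series converges uniformly on compact subsets of $(0,\infty)$ (since $\mathcal{W}$ is entire in its argument), which justifies interchanging the differential operator $D_{-}^{\lambda,\lambda',\xi,\xi',\gamma}$ with the infinite sum.

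Next, I apply Lemma \ref{lem-6} termwise to $t^{\rho-p-2-2k}=t^{-(p+2+2k-\rho)}$, converting each summand into an explicit ratio of six Gamma functions in the shifted argument times the power $x^{\lambda+\lambda'-\gamma-\rho-p}\cdot x^{2k}$. The hypotheses of the lemma must hold uniformly in $k\ge 0$, but because $k$ enters the relevant Gamma arguments only through shifts of the form $A+2k$, it suffices to verify them for $k=0$; this yields a system of lower bounds on $\Re(\rho)$ analogous to those imposed in Theorem \ref{Th2} and subsumed in the ``provided both sides exist'' clause of the statement.

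Finally, I factor out $x^{\lambda+\lambda'-\gamma-\rho-p}/2^{p+1}$, absorb the $x^{2k}$ factors together with $(-c)^{k}$ into the Fox-Wright argument $-cx^{2}/4$, and rewrite each $\Gamma(A+2k)$ as the $(A,2)$ slot in \eqref{Fox-Wright}. Together with the $(1,1)$ pair (supplying the customary $1/k!$) and the two denominator pairs $(\mu,\alpha)$ and $\bigl(\tfrac{p}{\xi}+\tfrac{b+2}{2},a\bigr)$ inherited from the Struve series, this identifies the sum termwise with the stated ${}_{4}\Psi_{5}$ evaluation. The only real obstacle is bookkeeping --- tracking which Gamma factor produced by Lemma \ref{lem-6} lands in the numerator versus the denominator of the Fox-Wright symbol and confirming that the six lemma-factors, the two Struve-factors, and the $(1,1)$-factor line up with the four numerator and five denominator slots of ${}_{4}\Psi_{5}$ --- and this is entirely parallel to the analogous accounting carried out in the proof of Theorem \ref{Th2}.
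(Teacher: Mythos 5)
Your overall strategy is the one the paper intends: Theorem \ref{Th4} is stated without proof, and the implicit argument is exactly the template of Theorem \ref{Th2} --- expand the generalized Struve function by \eqref{1}, interchange the operator with the sum, apply Lemma \ref{lem-6} termwise, and resum via \eqref{Fox-Wright}. So the choice of route is not in question.

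The gap is in the step you dismiss as bookkeeping. From your (correct) expansion $t^{\rho-1}\,{}_{a}\mathcal{W}_{p,b,c,\xi}^{\alpha,\mu}(1/t)=\sum_{k}c_{k}\,t^{\rho-p-2-2k}$, each term is $t^{-\sigma_{k}}$ with $\sigma_{k}=2k+p+2-\rho$, and Lemma \ref{lem-6} then produces the power $x^{\lambda+\lambda'-\gamma-\sigma_{k}}=x^{\lambda+\lambda'-\gamma+\rho-p-2}\,x^{-2k}$ together with Gamma factors such as $\Gamma(\xi'+p+2-\rho+2k)$. Hence the series you actually obtain is a Fox--Wright function of argument $-c/(4x^{2})$ with parameter pairs like $(\xi'+p+2-\rho,2)$; it is \emph{not} the stated ${}_{4}\Psi_{5}$ of argument $-cx^{2}/4$ with pairs $(\xi'+\rho+p+1,2)$, and your assertion that the lemma yields ``$x^{\lambda+\lambda'-\gamma-\rho-p}\cdot x^{2k}$'' is simply not what Lemma \ref{lem-6} gives for these exponents (the $k$-dependence comes out as $x^{-2k}$ and the $\rho$-dependence with the opposite sign). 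To reach the printed parameters one must apply the lemma with $\sigma=\rho+p+1+2k$, i.e.\ start from powers $t^{-(\rho+p+1+2k)}$, which is the expansion of $t^{-\rho}\,{}_{a}\mathcal{W}_{p,b,c,\xi}^{\alpha,\mu}(1/t)$ rather than of $t^{\rho-1}\,{}_{a}\mathcal{W}_{p,b,c,\xi}^{\alpha,\mu}(1/t)$ --- this is the substitution pattern the paper uses in proving Theorem \ref{Th2} --- and even then the computation delivers the prefactor $x^{\lambda+\lambda'-\gamma-\rho-p-1}$ and argument $-c/(4x^{2})$, so the remaining discrepancies with the displayed formula (the exponent of the prefactor, $-cx^{2}/4$ versus $-c/(4x^{2})$, $-\lambda+\xi'$ versus $-\lambda'+\xi'$, $p/\mu$ versus $p/\xi$) must be confronted explicitly rather than folded into a claim that everything ``lines up.'' As written, your derivation does not terminate in the stated identity.
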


\end{document}